\newtheorem{Th}{Theorem}
\newtheorem{Prop}[Th]{Proposition}
\newtheorem{Lm}[Th]{Lemma}
\theoremstyle{definition}
\newcommand{\supp}{\mathrm{supp}}
\newcommand{\id}{\mathrm{I}}
\newcommand{\ie}{\text{i.e.\;\,}}
\numberwithin{equation}{section}
\begin{document}
\title{On  spectra   of representations  and  graphs.  Erratum.}
\author{ {\bf Artem Dudko}  \\
                    IM PAN, Warsaw, Poland \\
          adudko@impan.pl \\
         {\bf Rostislav Grigorchuk
         } \\ Texas A\&M University, College Station, TX, USA  \\      grigorch@math.tamu.edu }
\maketitle
\date{}
\noindent Unfortunately the proof of the main result of \cite{DG15 Spectra}, Theorem 1, has a flaw. Namely, Lemma 13 used in the proof of Proposition 11 is correct only under an additional assumption that the operator $A$ is normal (adjoint for the one-sided shift operator in $l^2(\mathbb N)$ provides a counterexample). Below we prove a version of Lemma 13 that does not require the normality assumption and apply it to prove Proposition 11. In addition, the same version of the lemma appears in paper \cite{DG18 Shape} (as Lemma 3.1) where it is used in the proof of Theorem 1.6. We also explain here how to use the new version of Lemma 13 to correct the proof of Theorem 1.6 from \cite{DG18 Shape}.

\paragraph*{Corrections of \cite{DG15 Spectra}.} Let us briefly remind the notations.
For an action of a group $G$ on a space $X$ and a point $x\in X$ the notation $\rho_x$ stands for the quasi-regular representation of $G$ acting on $l^2(Gx)$. The \emph{orbital graph} $\Gamma_x=\Gamma_{x,g_1,\ldots,g_n}$ is a generalization of the Schreier graph for the case when a collection of elements $S=\{g_1,\ldots,g_n\}\subset G$ does not generate $G$. We refer the reader to \cite{DG15 Spectra} for precise definitions. For the reader's convenience recall the statement of Proposition 11 from \cite{DG15 Spectra}.
\begin{Prop}\label{Prop-equiv-M} Let $G$ act on a space $X$, $x,y\in X$ and $m\in\mathbb C[G]$. Let $\supp(m)=\{g_1,g_2,\ldots,g_n\}$. If the orbital graphs $\Gamma_{x,g_1,\ldots,g_n}$ and $\Gamma_{y,g_1,\ldots,g_n}$  are locally isomorphic then $\sigma(\rho_x(m))=\sigma(\rho_y(m))$ .
\end{Prop}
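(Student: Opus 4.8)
}
The plan is to isolate the two ingredients that make the argument work, one of which must be strengthened so as not to depend on normality. Ingredient~(i) (the ``local'' part, surviving from the original proof): for every $a\in\mathbb C[G]$ whose support lies in the subgroup $\langle g_1,\ldots,g_n\rangle$ one has $\|\rho_x(a)\|=\|\rho_y(a)\|$. Ingredient~(ii) (the ``spectral'' part, i.e.\ the corrected replacement for the old Lemma~13): for a bounded operator $T$ on a Hilbert space and $\lambda\in\mathbb C$, one has $\lambda\notin\sigma(T)$ if and only if both $T-\lambda\id$ and $(T-\lambda\id)^{*}$ are bounded below, equivalently $\min\sigma\big((T-\lambda\id)^{*}(T-\lambda\id)\big)>0$ and $\min\sigma\big((T-\lambda\id)(T-\lambda\id)^{*}\big)>0$. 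Here the normality of $T$ is genuinely \emph{not} needed; it is needed only if one tries, as in the flawed argument, to recover $\sigma(T)$ from the $C^{*}$-seminorm on polynomials in $T$ alone.

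For~(i): in the orthonormal basis $\{\delta_v:v\in Gx\}$ of $l^2(Gx)$ the $*$-representation $\rho_x$ satisfies $\rho_x(g)\delta_v=\delta_{gv}$, so the matrix entry of $\rho_x(a)$, where $a=\sum_g c_g\,g$, between vertices $u$ and $w$ equals $\sum_{g\in\supp(a),\ gw=u}c_g$. Whether $gw=u$ can be decided by following, from $w$, the $\mathbf g$-labelled path spelled out by a fixed word representing $g$, a path of length at most $L:=\max\{|g|:g\in\supp(a)\}<\infty$; hence the matrix of $\rho_x(a)$ on a ball $B_{\Gamma_x}(v,R)$ of the orbital graph $\Gamma_x=\Gamma_{x,g_1,\ldots,g_n}$ is determined, as labelled-graph data, by the ball $B_{\Gamma_x}(v,R+L)$. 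Since $\supp(a)\subseteq\langle\mathbf g\rangle$, the operator $\rho_x(a)$ is block-diagonal with respect to the decomposition of $l^2(Gx)$ over the connected components of $\Gamma_x$ (each component being a $\langle\mathbf g\rangle$-orbit), and on each connected component the finitely supported vectors are dense, each being supported in some finite ball. Therefore $\|\rho_x(a)\|=\sup_{v,R}\,\sup\{\|\rho_x(a)\xi\|:\xi\in l^2(B_{\Gamma_x}(v,R)),\ \|\xi\|\le1\}$, a supremum of quantities each of which depends only on the finite labelled graph $B_{\Gamma_x}(v,R+L)$. A local isomorphism identifies, for every $v$ and $R$, the ball $B_{\Gamma_x}(v,R+L)$ with a ball of $\Gamma_y$ of the same radius and conversely, so the two suprema coincide and $\|\rho_x(a)\|=\|\rho_y(a)\|$. (Note that the displayed analysis only ever uses $\mathbf g$-labelled balls, so no change of generating set is required.)

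Now fix $\lambda\in\mathbb C$, put $b=(m-\lambda e)^{*}(m-\lambda e)$ and $b'=(m-\lambda e)(m-\lambda e)^{*}$ (with $e$ the identity of $G$) — positive self-adjoint elements of $\mathbb C[G]$ with support in $\langle\mathbf g\rangle$ — and note $\rho_x(b)=T_x^{*}T_x$, $\rho_x(b')=T_xT_x^{*}$ with $T_x=\rho_x(m)-\lambda\id=\rho_x(m-\lambda e)$. For a positive operator $B$ one has the elementary identity $\min\sigma(B)=\|B\|-\big\|\,\|B\|\id-B\,\big\|$. By~(i), $\|\rho_x(b)\|=\|\rho_y(b)\|$, so $\|\rho_x(b)\|e-b$ is literally the same element of $\mathbb C[G]$ as $\|\rho_y(b)\|e-b$; applying~(i) once more to it gives $\min\sigma(\rho_x(b))=\min\sigma(\rho_y(b))$, and similarly $\min\sigma(\rho_x(b'))=\min\sigma(\rho_y(b'))$. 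Combining with~(ii): $\lambda\notin\sigma(\rho_x(m))$ iff $\min\sigma(\rho_x(b))>0$ and $\min\sigma(\rho_x(b'))>0$ iff $\min\sigma(\rho_y(b))>0$ and $\min\sigma(\rho_y(b'))>0$ iff $\lambda\notin\sigma(\rho_y(m))$; since $\lambda$ was arbitrary, $\sigma(\rho_x(m))=\sigma(\rho_y(m))$. The main obstacle is precisely step~(ii) — the point where the original proof broke: because $m$ need not be self-adjoint and $\rho_x(m)$ need not be normal, invertibility of $\rho_x(m)-\lambda\id$ must be tested on both sides, via lower bounds for the two positive operators $\rho_x(b)$ and $\rho_x(b')$, and it is only for those positive operators that the ``local'' norm comparison of~(i), together with the identity above, transfers information from $x$ to $y$. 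The verifications inside~(i) (that the operator norm equals this supremum over finite balls, and that the finite labelled-graph data determine the relevant matrix entries) are routine.
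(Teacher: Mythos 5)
Your proof is correct and follows essentially the same route as the paper: both arguments rest on the two‑sided criterion that $T-\lambda\id$ is invertible iff the positive operators $(T-\lambda\id)^{*}(T-\lambda\id)$ and $(T-\lambda\id)(T-\lambda\id)^{*}$ are, and on transferring spectral data of positive elements of $\mathbb C[G]$ between $\rho_x$ and $\rho_y$ via the local isomorphism of the orbital graphs. Your identity $\min\sigma(B)=\|B\|-\bigl\|\,\|B\|\id-B\,\bigr\|$ is just a repackaging of the paper's device of detecting $1\in\sigma\bigl(\id-\tfrac{1}{R^{2}}B\bigr)$ through the norm of that positive contraction.
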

\noindent To prove Proposition \ref{Prop-equiv-M} we use the following version of Lemma 13 from \cite{DG15 Spectra}.
\begin{Lm}\label{LmEquivNorm} Let $A$ be any bounded nonzero linear operator on a Hilbert space and $R\geqslant 2\|A\|$. Then the following assertions are equivalent:
\begin{itemize} \item[$1)$] $\lambda\in \sigma(A)$,
\item[$2)$] $1\in \sigma(\mathrm{I}-
\tfrac{1}{R^2}(A-\lambda\mathrm{I})(A-\lambda\mathrm{I})^{*})\cup \sigma(\mathrm{I}-
\tfrac{1}{R^2}(A-\lambda\mathrm{I})^*(A-\lambda\mathrm{I})),$
\end{itemize}
where $\mathrm{I}$ is the identity operator.
\end{Lm}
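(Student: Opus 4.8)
\medskip\noindent\emph{Proof plan.}
The plan is to strip away the cosmetic role of $R$ and reduce everything to a basic fact about when $B:=A-\lambda\mathrm{I}$ is invertible. Since $t\mapsto 1-t/R^{2}$ is affine, the spectral mapping theorem gives, for \emph{any} $R>0$,
\[
1\in\sigma\!\left(\mathrm{I}-\tfrac1{R^{2}}BB^{*}\right)\iff 0\in\sigma(BB^{*}),
\qquad
1\in\sigma\!\left(\mathrm{I}-\tfrac1{R^{2}}B^{*}B\right)\iff 0\in\sigma(B^{*}B),
\]
while assertion $1)$ is, by definition, the statement that $B$ is not invertible. (The hypothesis $R\ge 2\|A\|$ plays no role in the equivalence: if $\lambda\in\sigma(A)$ then $|\lambda|\le\|A\|$, so $\|B\|\le 2\|A\|\le R$ and the two operators above are positive contractions — a convenient normalization for the application to Proposition~\ref{Prop-equiv-M}, but not needed for the proof.) Hence it suffices to establish the $R$-free equivalence
\[
B\ \text{is invertible}\iff 0\notin\sigma(BB^{*})\ \text{and}\ 0\notin\sigma(B^{*}B).
\]

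Next I would use the standard description of the spectrum near $0$ of a positive self-adjoint operator $T\ge0$: one has $0\notin\sigma(T)$ if and only if $T\ge c\,\mathrm{I}$ for some $c>0$, i.e.\ if and only if $T$ is bounded below, because for self-adjoint $T$ the infimum of the spectrum equals $\inf_{\|x\|=1}\langle Tx,x\rangle$. Applied to $T=B^{*}B$, together with $\langle B^{*}Bx,x\rangle=\|Bx\|^{2}$, this shows $0\notin\sigma(B^{*}B)$ iff $B$ is bounded below; symmetrically $0\notin\sigma(BB^{*})$ iff $B^{*}$ is bounded below.

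It then remains to check that $B$ is invertible if and only if both $B$ and $B^{*}$ are bounded below. The forward direction is immediate: $\|x\|\le\|B^{-1}\|\,\|Bx\|$ bounds $B$ below, and $B^{*}$ is invertible with inverse $(B^{-1})^{*}$, hence bounded below as well. For the converse, $B$ bounded below makes $B$ injective with closed range, and $B^{*}$ bounded below makes $\ker B^{*}=\{0\}$, so $\overline{\mathrm{Ran}\,B}=(\ker B^{*})^{\perp}$ is the whole space; combined with closedness of $\mathrm{Ran}\,B$ this makes $B$ a bijection, whose inverse is automatically bounded since $B$ is bounded below. Chaining the three equivalences and taking contrapositives yields $1)\Leftrightarrow 2)$.

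I expect no real analytic obstacle: every step is routine Hilbert-space functional analysis. The one point that must be handled with care — and exactly the one that broke the original Lemma~13 — is that \emph{both} $\sigma(BB^{*})$ and $\sigma(B^{*}B)$ are genuinely needed, because for non-normal $B$ the conditions ``$B$ is bounded below'' and ``$B^{*}$ is bounded below'' are independent: each may hold while $B$ fails to be invertible. For instance the backward shift $B=S^{*}$ on $l^{2}(\mathbb N)$ satisfies $B^{*}B=\mathrm{I}$, so $0\notin\sigma(B^{*}B)$, yet $B$ is not invertible, which is detected only by $0\in\sigma(BB^{*})$. Thus the substance of the correction relative to the normal case is precisely the passage to the union in assertion $2)$.
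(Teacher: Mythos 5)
Your proof is correct. Like the paper, you first use the affine spectral mapping $t\mapsto 1-t/R^{2}$ to reduce assertion $2)$ to the simultaneous invertibility of $BB^{*}$ and $B^{*}B$, where $B=A-\lambda\mathrm{I}$ (and you are right that the hypothesis $R\geqslant 2\|A\|$ plays no role in the equivalence itself; the paper's proof does not use it either --- it only matters later, to make the relevant operators positive contractions). Where you diverge is in the crux. The paper argues purely algebraically: invertibility of $BB^{*}$ gives $B$ the right inverse $B^{*}(BB^{*})^{-1}$, invertibility of $B^{*}B$ gives the left inverse $(B^{*}B)^{-1}B^{*}$, and a left and a right inverse must coincide. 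You instead argue analytically, via ``$0\notin\sigma(B^{*}B)$ iff $B$ is bounded below'' (and symmetrically for $B^{*}$) together with the closed-range/dense-range criterion for invertibility. Both routes are standard and complete; the paper's is shorter, while yours makes transparent exactly which half of invertibility each of the two spectra controls, which is precisely the content of the correction to the original Lemma~13. One small slip in your closing illustration: for the backward shift $B=S^{*}$ on $l^{2}(\mathbb N)$ one has $BB^{*}=S^{*}S=\mathrm{I}$ while $B^{*}B=SS^{*}=\mathrm{I}-P_{e_{0}}$, so it is $0\in\sigma(B^{*}B)$ (equivalently, $B$ fails to be bounded below) that detects the non-invertibility, not $0\in\sigma(BB^{*})$; the two products are interchanged in your remark, though the moral --- that the union in assertion $2)$ is genuinely needed for non-normal operators --- is unaffected.
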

\begin{proof} Let $1\notin\sigma(\mathrm{I}-
\tfrac{1}{R^2}(A-\lambda\mathrm{I})(A-\lambda\mathrm{I})^{*})\cup \sigma(\mathrm{I}-
\tfrac{1}{R^2}(A-\lambda\mathrm{I})^*(A-\lambda\mathrm{I}))$. Then both $(A-\lambda\mathrm{I})(A-\lambda\mathrm{I})^{*}$ and $(A-\lambda\mathrm{I})^*(A-\lambda\mathrm{I})$ are invertible operators. In particular, there exist bounded linear operators $B,C$ on the same Hilbert space such that $$(A-\lambda\mathrm{I})(A-\lambda\mathrm{I})^{*}B=C(A-\lambda\mathrm{I})^*(A-\lambda\mathrm{I})=\mathrm{I}.$$ This means that $A-\lambda\mathrm{I}$ has both a left and a right inverse. Obviously, these inverses coincide, and so $A-\lambda\mathrm{I}$ is invertible. Thus, $\lambda\notin \sigma(A)$.

On the other hand, if $\lambda\notin\sigma(A)$ then $A-\lambda\mathrm{I}$ and $(A-\lambda\mathrm{I})^{*}$ are invertible, therefore $\tfrac{1}{R^2}(A-\lambda\mathrm{I})(A-\lambda\mathrm{I})^{*}$ and $(A-\lambda\mathrm{I})^*(A-\lambda\mathrm{I})$ are invertible. It follows that $$1\notin \sigma(\mathrm{I}-
\tfrac{1}{R^2}(A-\lambda\mathrm{I})(A-\lambda\mathrm{I})^{*})\cup \sigma(\mathrm{I}-
\tfrac{1}{R^2}(A-\lambda\mathrm{I})^*(A-\lambda\mathrm{I})).$$
\end{proof}

\begin{proof}[Proof of Proposition \ref{Prop-equiv-M}.]
Let $G,X,x,y,m$ be as in the statement of Proposition \ref{Prop-equiv-M} and let the orbital graphs $\Gamma_x=\Gamma_{x,g_1,\ldots,g_n}$ and $\Gamma_y=\Gamma_{y,g_1,\ldots,g_n}$ be locally isomorphic.
Set $$R=2\sum\limits_{g\in\supp(m)} |m(g)|.$$
Fix a point $\alpha$ from $\sigma(\rho_x(m))$ and let us show that $\alpha\in \sigma(\rho_y(m))$.

Clearly,
$|\alpha|\leqslant\|\rho_x(m)\|\leqslant \tfrac{1}{2}R$. Using Lemma \ref{LmEquivNorm} we obtain that $$1\in \sigma(\mathrm{I}-
\tfrac{1}{R^2}(\rho_x(m)-\alpha\mathrm{I})(\rho_x(m)-\alpha\mathrm{I})^{*})\cup \sigma(\mathrm{I}-
\tfrac{1}{R^2}(\rho_x(m)-\alpha\mathrm{I})^{*}(\rho_x(m)-\alpha\mathrm{I})).$$ Assume, for instance, that $1\in \sigma(\mathrm{I}-
\tfrac{1}{R^2}(\rho_x(m)-\alpha\mathrm{I})(\rho_x(m)-\alpha\mathrm{I})^{*})$ (the second case can be treated similarly). The
operator $\mathrm{I}-
\tfrac{1}{R^2}(\rho_x(m)-\alpha\mathrm{I})(\rho_x(m)-\alpha\mathrm{I})^{*}$
is of the form $\rho_x(s)$ for some $s\in \mathbb C[G]$,
 positive and of norm $\leqslant 1$. In fact,
$\|\rho_x(s)\|=1$, since we have $1\in\sigma(\rho_x(s))$.

Further, consider orbital graphs $\Gamma_x$ and $\Gamma_y$. Let $\epsilon>0$. Since
$$\sup\limits_{\xi:\|\xi\|=1} (\rho_x(s)\xi,\xi)=1$$ we can find $l\in\mathbb N$ and a vector $\eta\in l^2(Gx)$
 supported on $B_l(x)$ such that $(\rho_x(s)\eta,\eta)>1-\epsilon$. Let
 $v\in\Gamma_y$ be such that $B_l(v)\subset \Gamma_y$ is isomorphic (as a rooted labeled graph) to
  $B_l(x)$. Let $\eta'\in l^2(B_l(v))\subset l^2(\Gamma_y)$ be a copy of $\eta$ via this isomorphism.
   Then one has:
  $$(\rho_x(s)\eta',\eta')=(\rho_y(s)\eta,\eta)>1-\epsilon.$$ Since $\epsilon>0$ is arbitrary it follows that $\|\rho_y(s)\|\geqslant 1$. By the choice of $R$ we have that the spectrum of
  $$\rho_y(s)=\mathrm{I}-
\tfrac{1}{R^2}(\rho_y(m)-\alpha\mathrm{I})(\rho_x(m)-\alpha\mathrm{I})^{*})$$ is a subset of $[0,1]$, and therefore $\|\rho_y(s)\|=1$. By positivity of $\rho_y(s)$ we have $1\in\rho_y(s)$. Using Lemma \ref{LmEquivNorm} we obtain that $\alpha\in\sigma(\rho_y(m))$ and
  this finishes the proof of Proposition \ref{Prop-equiv-M}. \end{proof}
\paragraph*{Corrections of \cite{DG18 Shape}.} Let us briefly recall the notations. A \emph{weighted graph} $\Gamma=(V,E,\alpha)$ is a graph with a weight $\alpha_{v,e}\in\mathbb C$ associated to every pair $(v,e)$ where $v$ is a vertex of $\Gamma$ and $e$ is an edge adjacent to $v$. We call such graph $\Gamma$ \emph{uniformly bounded} if the degrees of vertexes and the weights are uniformly bounded. To a uniformly bounded weighted graph $\Gamma$ we associate a \emph{Laplace type operator} on $l^2(V)$
$$(H_\Gamma f)(v)=\sum\limits_{e\in E_v}\alpha_{v,e}f(r_v(e)),\;\;\text{for}\;\; f\in l^2(V),$$ where $E_v$ stands for the set of edges adjacent to $v$ and $r_v(e)$ stands for the second vertex of an edge $e\in E_v$. For details we refer the reader to \cite{DG18 Shape}, Section 3. Let us recall the formulation of Theorem 1.6 from \cite{DG18 Shape}.
\begin{Th}[Weak Hulanicki Theorem for graphs]\label{ThGraphCov}  Let $\Gamma_1$ be a uniformly bounded connected weighted graph which covers a weighted graph $\Gamma_2$ such that either
\begin{itemize}\item[$a)$] $\Gamma_1$ is amenable and $\Gamma_2$ is finite or
\item[$b)$] $\Gamma_1$ has subexponential growth.
 \end{itemize} Let $H_1,H_2$ be the Laplace type operators associated to $\Gamma_1,\Gamma_2$. Then $\sigma(H_2)\subset \sigma(H_1)$.
\end{Th}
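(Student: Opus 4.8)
\medskip
\noindent\emph{Proof of Theorem~\ref{ThGraphCov} (sketch).}
The plan is to follow the scheme of the proof of Proposition~\ref{Prop-equiv-M}, with amenability of $\Gamma_1$ in case~$a)$, resp.\ subexponential growth in case~$b)$, playing the role that local isomorphism of orbital graphs played there. Write $\pi\colon\Gamma_1\to\Gamma_2$ for the covering map and fix $\lambda\in\sigma(H_2)$; we must show $\lambda\in\sigma(H_1)$. Since $\pi$ is locally an isomorphism of weighted graphs, $\Gamma_1$ and $\Gamma_2$ satisfy the same uniform bounds on degrees and on weights, so there is a single constant $R$, depending only on those bounds, with $R\geqslant2\|H_1\|$ and $R\geqslant2\|H_2\|$. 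By Lemma~\ref{LmEquivNorm} applied to $A=H_2$,
\[
1\in\sigma\bigl(\mathrm{I}-\tfrac{1}{R^2}(H_2-\lambda\mathrm{I})(H_2-\lambda\mathrm{I})^{*}\bigr)\cup\sigma\bigl(\mathrm{I}-\tfrac{1}{R^2}(H_2-\lambda\mathrm{I})^{*}(H_2-\lambda\mathrm{I})\bigr),
\]
and I would treat the first case, the second being symmetric (replace $H_j-\lambda\mathrm I$ by $(H_j-\lambda\mathrm I)^{*}$ everywhere below; the covering relates $(H_1-\lambda\mathrm I)^{*}$ to $(H_2-\lambda\mathrm I)^{*}$ exactly as it relates $H_1-\lambda\mathrm I$ to $H_2-\lambda\mathrm I$). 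Set $T_j=\mathrm{I}-\tfrac{1}{R^2}(H_j-\lambda\mathrm{I})(H_j-\lambda\mathrm{I})^{*}$ on $l^2(V_j)$ for $j=1,2$. By the choice of $R$ both $T_j$ are positive with $\|T_j\|\leqslant1$, and each is of finite range: its matrix entry between two vertices vanishes unless these lie at distance $\leqslant2$. Since $1\in\sigma(T_2)$ we have $\|T_2\|=1$, and it suffices to prove $\|T_1\|=1$: positivity then gives $1\in\sigma(T_1)$, and Lemma~\ref{LmEquivNorm} applied to $A=H_1$ yields $\lambda\in\sigma(H_1)$.

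The structural input replacing ``balls of $\Gamma_x$ embed into $\Gamma_y$'' is that, because $\pi$ restricts to an isomorphism of weighted graphs on each ball of radius $1$, the pullback $\pi^{*}\phi:=\phi\circ\pi$ intertwines the relevant operators: $(H_1-\lambda\mathrm I)\pi^{*}\phi=\pi^{*}\bigl((H_2-\lambda\mathrm I)\phi\bigr)$ and likewise for adjoints, hence $T_1\pi^{*}\phi=\pi^{*}(T_2\phi)$, as functions on $V_1$ (the right-hand sides need not lie in $l^2$, but the identities hold pointwise, since $T_1,T_2$ have finite range). Fix $\varepsilon>0$. As $\|T_2\|=\sup_{\|\phi\|=1}(T_2\phi,\phi)=1$ and $T_2\geqslant0$, choose a finitely supported $\phi\in l^2(V_2)$ with $\|\phi\|=1$, $\supp\phi=F$ finite, and $(T_2\phi,\phi)>1-\varepsilon$. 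For a finite $\Phi\subseteq V_1$ put $\psi=(\pi^{*}\phi)\cdot\mathbf 1_\Phi\in l^2(V_1)$ and $\partial_2\Phi=\{v\in\Phi\colon B_2(v)\not\subseteq\Phi\}$. Splitting the two quadratic forms over the $2$-interior $\Phi\setminus\partial_2\Phi$ and its complement, and using range $\leqslant2$ of $T_1$ together with the intertwining identity, one obtains, with implied constants depending only on the uniform bounds,
\[
(T_1\psi,\psi)=\sum_{w\in F}a_w\,(T_2\phi)(w)\overline{\phi(w)}+O(|\partial_2\Phi|),\qquad\|\psi\|^2=\sum_{w\in F}a_w\,|\phi(w)|^2+O(|\partial_2\Phi|),
\]
where $a_w=|\pi^{-1}(w)\cap(\Phi\setminus\partial_2\Phi)|$. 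Consequently, if $\Phi$ is chosen so that the fibre counts $a_w$ $(w\in F)$ are equal up to $o(\min_w a_w)$ and $|\partial_2\Phi|=o(\min_w a_w)$, then $(T_1\psi,\psi)/\|\psi\|^2\to(T_2\phi,\phi)>1-\varepsilon$; in particular $\|T_1\|\geqslant1-2\varepsilon$ for $\Phi$ good enough, and letting $\varepsilon\to0$ gives $\|T_1\|=1$.

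I expect the main obstacle to be producing such a set $\Phi$, and this is precisely where the hypotheses enter. In case~$a)$, $\Gamma_2$ is finite, and I would show that \emph{any} sufficiently good F\o lner set of the amenable graph $\Gamma_1$ works: lifting an edge gives, for adjacent $w,w'\in V_2$, a bijection $\pi^{-1}(w)\to\pi^{-1}(w')$ displacing each vertex by $1$, so for every finite $\Phi$ and all $w,w'$ one has $\bigl||\pi^{-1}(w)\cap\Phi|-|\pi^{-1}(w')\cap\Phi|\bigr|\leqslant|\partial_{\mathrm{diam}\,\Gamma_2}\Phi|$; combined with the fact that each fibre is $(\mathrm{diam}\,\Gamma_2)$-dense in $\Gamma_1$ (lift geodesics of $\Gamma_2$), a F\o lner $\Phi$ automatically has all $a_w$ equal up to $o(|\Phi|)$ and of size $\Theta(|\Phi|)$, so both conditions hold. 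In case~$b)$ one first observes that $\Gamma_2$ also has subexponential growth, since $\pi(B_n(v))=B_n(\pi(v))$ by path lifting; hence $\Gamma_1$ is amenable and F\o lner sets exist. The extra subtlety is that a fibre of an infinite-sheeted covering need not be coarsely dense, so $\Phi$ must be taken adapted to $\phi$ --- roughly, the lift to $\Gamma_1$ of a F\o lner set of $\Gamma_2$ containing $F$, truncated along the fibres to a set that is itself F\o lner in $\Gamma_1$ --- and it is here, rather than in bare amenability, that subexponential growth is really used, in order to make $|\partial_2\Phi|$ and the fibre truncation negligible at the same time; verifying the two displayed estimates for this $\Phi$ is what I expect to be the technical heart of the argument. (If the covering has finitely many sheets the discussion is vacuous: $\pi^{*}$ is then a bounded operator $l^2(V_2)\to l^2(V_1)$ scaling norms by a fixed factor and intertwining $H_2-\lambda\mathrm I$ with $H_1-\lambda\mathrm I$, so $\sigma(H_2)\subseteq\sigma(H_1)$ is immediate.)
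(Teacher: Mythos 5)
Your reduction coincides with the erratum's: fix $\lambda\in\sigma(H_2)$, apply Lemma~\ref{LmEquivNorm}, and transfer the resulting positive, finite-range, norm-one operator $T_2$ across the covering. Your pointwise intertwining $T_1\pi^{*}\phi=\pi^{*}(T_2\phi)$ is exactly what the erratum encodes via the graph operations and Lemmas~\ref{RemPropGraphs}--\ref{LmCoveringProperties1} (namely $T_i=H_{\Gamma_i'}$ with $\Gamma_1'$ covering $\Gamma_2'$), so that part is correct and essentially identical. The divergence is in the next step: the erratum at this point simply invokes the proof of Theorem~1.6 of \cite{DG18 Shape} for \emph{positive} Laplace type operators --- that is where amenability, respectively subexponential growth, is actually consumed --- whereas you set out to re-prove that positive case by a F\o lner averaging argument. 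Your case $a)$ goes through: with $\Gamma_2$ finite and connected, the edge-lifting bijections between fibres give $\bigl||\pi^{-1}(w)\cap\Phi|-|\pi^{-1}(w')\cap\Phi|\bigr|=O(|\partial_D\Phi|)$ with $D=\mathrm{diam}\,\Gamma_2$, hence every fibre meets a F\o lner set $\Phi$ in $|\Phi|/|V_2|+O(|\partial_D\Phi|)$ vertices, and both of your requirements on $\Phi$ hold.

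Case $b)$, however, contains a genuine gap, which you flag but do not close. You need a finite $\Phi\subset V_1$ with $|\partial_2\Phi|=o\bigl(\min_{w\in F}|\pi^{-1}(w)\cap\Phi|\bigr)$, and when the base is infinite and the covering infinite-sheeted the fibres can be far too sparse for F\o lner sets or balls to provide this: already for the grid $\mathbb Z^2$ covering the Schreier graph of $\mathbb Z^2$ modulo $\mathbb Z\times\{0\}$ (a polynomial-growth instance of case $b)$) one has $|\pi^{-1}(w)\cap B_n|\asymp n\asymp|\partial_2B_n|$, so the required inequality fails for every ball and every F\o lner set of this type. Your proposed remedy --- lifting a F\o lner set of $\Gamma_2$ and truncating along fibres --- is not carried out, and it is not evident it can be, since the truncation reintroduces boundary in exactly the directions you need to control; subexponential growth is not visibly used anywhere in your sketch beyond supplying amenability, which the example above shows is insufficient for your mechanism. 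This missing step is precisely the content of the positive case of Theorem~1.6 in \cite{DG18 Shape}, which is established there by a different device rather than by equidistributing a test vector over fibres. So either quote that result, as the erratum does, or supply a complete argument; as written, case $b)$ lacks its main step.
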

Let us now explain in detail the claim made in \cite{DG18 Shape}, p. 63, lines 12-14, \ie that for a uniformly bounded weighted graph $\Gamma$ the operator
$$\mathrm{I}-
\tfrac{1}{R^2}(H_\Gamma-\lambda\mathrm{I})(H_\Gamma-\lambda\mathrm{I})^{*} $$
on $l^2(V)$ coincides with an operator $H_{\Gamma'}$, where $\Gamma'=(V,E',\beta)$ for certain edge set $E'$ and a collection of weights $\beta$.  For this purpose we introduce some operations on weighted graphs.\\
{\bf Multiplication by a number.} Given a number $\lambda\in\mathbb C$ set $\lambda\Gamma=(V,E,\lambda\alpha),$ where $\lambda\alpha$ means that each weight from $\alpha$ is multiplied by $\lambda$.\\
{\bf Adding a number.} Denote by $\Gamma+\lambda$ (or $\lambda+\Gamma$) the graph obtained from $\Gamma$ by adding a loop with weight $\lambda$ at each vertex of $\Gamma$.\\
{\bf Conjugation.} Denote by $\alpha^*$ the collection of weights on the edges from $E$ such that $\alpha^*_{v,e}=\overline{\alpha_{r_v(e),e}}$ for every vertex $v\in V$ and any edge $e\in E$ adjacent to $v$. In other words, we complex conjugate each weight and change its direction on the edge. Set $\Gamma^*=(V,E,\alpha^*)$.\\
{\bf Composition of two graphs.} Let $\widetilde\Gamma=(V,\widetilde E,\widetilde\alpha)$ be another weighted graph with the same vertex set $V$. We introduce the weighted graph $\Gamma\circ\widetilde\Gamma=(V,E',\beta)$ such that $E'$ is in a bijection with the pairs of adjacent edges $(e_1,e_2),e_1\in E,e_2\in \widetilde E$. If $e_1$ joins vertexes $v$ and $w$ and $e_2$ joins vertexes $w$ and $u$ then the corresponding edge $e$ from $E'$ joins $v$ and $u$. Moreover, $\beta_{v,e}=\alpha_{v,e_1}\widetilde\alpha_{w,e_2}$.
\begin{Lm}\label{RemPropGraphs} The operations defined above satisfy the following properties:
$$H_{\lambda\Gamma}=\lambda H_\Gamma,\;\;H_{\lambda+\Gamma}=\lambda\id+H_\Gamma,\;\;
H_{\Gamma^*}=H_\Gamma^*,\;\;H_{\Gamma\circ \widetilde\Gamma}=H_\Gamma H_{\widetilde\Gamma}.$$
\end{Lm}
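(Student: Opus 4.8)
The plan is to verify each of the four identities in Lemma~\ref{RemPropGraphs} by a direct computation of the action of both sides on an arbitrary $f\in l^2(V)$ at an arbitrary vertex $v\in V$, using only the definition of $H_\Gamma$ and the definitions of the four operations on weighted graphs. Since each operation changes either the weights, the direction of edges, or the edge set in a completely explicit way, every identity reduces to matching two finite sums indexed by edges adjacent to $v$ (or to pairs of adjacent edges, in the last case).

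Concretely: for the first identity, $(H_{\lambda\Gamma}f)(v)=\sum_{e\in E_v}(\lambda\alpha_{v,e})f(r_v(e))=\lambda(H_\Gamma f)(v)$, immediate from linearity of the sum. For the second, the loop added at $v$ contributes exactly one new edge $e_0$ with $r_v(e_0)=v$ and weight $\lambda$, so $(H_{\lambda+\Gamma}f)(v)=\lambda f(v)+\sum_{e\in E_v}\alpha_{v,e}f(r_v(e))=(\lambda\id f)(v)+(H_\Gamma f)(v)$; one should note in passing that a loop is counted once in $E_v$, consistently with the conventions of \cite{DG18 Shape}. For the third, one computes the adjoint directly from $(H_\Gamma f,g)=\sum_{v}\overline{g(v)}\sum_{e\in E_v}\alpha_{v,e}f(r_v(e))$ and reindexes the double sum over oriented incident pairs $(v,e)$: swapping the roles of $v$ and $r_v(e)$ turns $\alpha_{v,e}$ into $\overline{\alpha^*_{r_v(e),e}}$, which by definition of $\alpha^*$ recovers $(H_{\Gamma^*}f,g)=(f,H_\Gamma^* g)$; hence $H_{\Gamma^*}=H_\Gamma^*$. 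The only mild care here is the bookkeeping of the edge orientation, but uniform boundedness guarantees all sums converge absolutely, so Fubini-type reindexing is legitimate.

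For the fourth identity I would compute $(H_\Gamma H_{\widetilde\Gamma}f)(v)=\sum_{e_1\in E_v}\alpha_{v,e_1}\big(H_{\widetilde\Gamma}f\big)(r_v(e_1))=\sum_{e_1\in E_v}\alpha_{v,e_1}\sum_{e_2\in \widetilde E_{w}}\widetilde\alpha_{w,e_2}f(r_w(e_2))$, where $w=r_v(e_1)$. By the definition of $\Gamma\circ\widetilde\Gamma$, the pairs $(e_1,e_2)$ with $e_1\in E_v$, $e_2\in\widetilde E_{w}$ and $w$ the shared endpoint are precisely in bijection with the edges $e\in E'_v$, and for the corresponding $e$ one has $r_v(e)=r_w(e_2)=u$ and $\beta_{v,e}=\alpha_{v,e_1}\widetilde\alpha_{w,e_2}$; substituting gives exactly $(H_{\Gamma\circ\widetilde\Gamma}f)(v)$. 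The main point to get right, and the one place a reader could stumble, is the precise correspondence between the iterated sum $\sum_{e_1}\sum_{e_2}$ and the single sum $\sum_{e\in E'_v}$ — i.e.\ that ``$e_1$ joins $v,w$ and $e_2$ joins $w,u$'' captures all and only the terms appearing, with multiplicities matching (parallel edges and loops must each be counted with the right multiplicity). This is the only genuine obstacle; the other three identities are routine once the conventions on loops and orientations are fixed, and absolute convergence of every sum involved follows from $\Gamma,\widetilde\Gamma$ being uniformly bounded.
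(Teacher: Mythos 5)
Your proposal is correct and follows essentially the same route as the paper: a direct computation of both sides on an arbitrary $f\in l^2(V)$ at a vertex $v$, with the composition identity reduced to the bijection between $E'_v$ and pairs of adjacent edges $(e_1,e_2)$ with $e_1\in E_v$, $e_2\in\widetilde E_{r_v(e_1)}$. The paper writes out only this last identity and leaves the first three as routine, whereas you verify all four (including the orientation/conjugation bookkeeping for $H_{\Gamma^*}=H_\Gamma^*$), but the substance is identical.
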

\begin{proof} Let us prove the last identity, the first three can be proven in a similar fashion. Recall that for two vertexes $v,w$ (not necessarily distinct) and an edge $e$ the identity $w=r_v(e)$ means that $e$ joins $v$ and $w$. By definition,
\begin{align*}(H_{\Gamma\circ \widetilde\Gamma}f)(v)=\sum\limits_{e\in E'_v}\beta_{v,e}f(r_v(e))=
\sum\limits_{e_1\in E_v}\sum\limits_{e_2\in \widetilde E_{w}}\alpha_{v,e_1}\widetilde \alpha_{w,e_2}f(r_w(e_2)),
\end{align*} where $w=r_v(e_1)$. Observing that
\begin{align*}\sum\limits_{e_2\in \widetilde E_w}\widetilde \alpha_{w,e_2}f(r_w(e_2))=(H_{\widetilde\Gamma}f)(w)\;\;\text{and}\\ \sum\limits_{e_1\in E_v}\alpha_{v,e_1} (H_{\widetilde\Gamma}f)(r_v(e_1))=
(H_\Gamma(H_{\widetilde\Gamma}f))(v)\end{align*}
we obtain the desired identity $H_{\Gamma\circ \widetilde\Gamma}=H_\Gamma H_{\widetilde\Gamma}$
\end{proof}
The following two lemmas state that the operations defined above respect graph coverings.
\begin{Lm}\label{LmCoveringProperties} Let $\Gamma_1=(V_1,E_1,\alpha_1)$, $\Gamma_2=(V_2,E_2,\alpha_2)$ be weighted graphs such that $\Gamma_1$ covers $\Gamma_2$. Then \begin{itemize}
\item{} $\lambda\Gamma_1$ covers $\lambda\Gamma_2$;
\item{} $\lambda+\Gamma_1$ covers $\lambda+\Gamma_2$;
\item{} $\Gamma_1^*$ covers $\Gamma_2^*$.
\end{itemize}\end{Lm}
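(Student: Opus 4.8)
The plan is to recall the definition of a covering of weighted graphs and verify, operation by operation, that the covering map $p\colon V_1\to V_2$ together with its action on edges is again a covering for the transformed graphs. A covering of weighted graphs consists of a surjection $p\colon V_1\to V_2$ which, for every $v\in V_1$, restricts to a bijection between the edges of $\Gamma_1$ adjacent to $v$ and the edges of $\Gamma_2$ adjacent to $p(v)$, compatibly with the endpoint maps $r$ and with the weights: if $e\in (E_1)_v$ maps to $\bar e\in (E_2)_{p(v)}$ then $p(r_v(e))=r_{p(v)}(\bar e)$ and $(\alpha_1)_{v,e}=(\alpha_2)_{p(v),\bar e}$.

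With this in hand, each of the three bullets is immediate. For multiplication by $\lambda$: the vertex set, edge set, and endpoint maps of $\lambda\Gamma_i$ are the same as those of $\Gamma_i$, so $p$ remains an edge-local bijection; the only thing to check is that weights still match, and indeed $\lambda(\alpha_1)_{v,e}=\lambda(\alpha_2)_{p(v),\bar e}$. For adding $\lambda$: $\lambda+\Gamma_i$ adds one loop of weight $\lambda$ at every vertex; the covering map sends the new loop at $v$ to the new loop at $p(v)$, this is still a bijection of adjacent-edge sets since it is the old bijection extended by one pair, the endpoint condition holds trivially for a loop ($p(v)=p(v)$), and the weights agree because both new loops carry weight $\lambda$. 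For conjugation: $\Gamma_i^*$ has the same vertices, edges and endpoint maps, with weights $(\alpha_i^*)_{v,e}=\overline{(\alpha_i)_{r_v(e),e}}$; applying the covering map and using that it matches the weights of $\Gamma_1$ with those of $\Gamma_2$ at the vertex $r_v(e)$ (which maps to $r_{p(v)}(\bar e)$), we get $(\alpha_1^*)_{v,e}=\overline{(\alpha_1)_{r_v(e),e}}=\overline{(\alpha_2)_{r_{p(v)}(\bar e),\bar e}}=(\alpha_2^*)_{p(v),\bar e}$.

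There is essentially no hard step here: the work is purely bookkeeping, carefully unwinding the definition of a weighted-graph covering and checking the weight-compatibility condition transforms correctly under each operation. The only point requiring a moment of care is the conjugation case, where the weight at $(v,e)$ in $\Gamma^*$ is defined via the \emph{other} endpoint of $e$, so one must invoke the covering property at that other endpoint rather than at $v$; once one observes that the covering map intertwines $r_v$ and $r_{p(v)}$, this goes through. I would present the proof by first stating the covering axioms explicitly, then dispatching the three cases in a short itemized list, spending one sentence on each.
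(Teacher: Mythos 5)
Your proof is correct and is exactly the routine verification the paper has in mind: the paper gives no proof of this lemma, stating only that it ``follows straightforwardly from definitions'' and leaving it as an exercise, which your write-up carries out. Your observation about the conjugation case --- that the weight of $(v,e)$ in $\Gamma^*$ is read off at the other endpoint $r_v(e)$, so one must use the covering property there together with the intertwining $p(r_v(e))=r_{p(v)}(\bar e)$ --- is precisely the one point where care is needed, and you handle it correctly.
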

The proof follows straightforwardly from definitions and we leave it to the reader as an exercise.
\begin{Lm}\label{LmCoveringProperties1}  For $i=1,2$ let $\Gamma_i=(V_i,E_i,\alpha_i)$ and $\widetilde\Gamma_i=(V_i,E_i,\widetilde\alpha_i)$ be weighted graphs with the same vertex and edge sets but possibly different weights. Assume that $\Gamma_1$ covers $\Gamma_2$ and $\widetilde\Gamma_1$ covers $\widetilde \Gamma_2$ with identical covering maps. Then
$\Gamma_1\circ\widetilde\Gamma_1$ covers
$\Gamma_2\circ\widetilde\Gamma_2.$
\end{Lm}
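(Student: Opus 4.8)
The plan is to construct the covering map $q\colon\Gamma_1\circ\widetilde\Gamma_1\to\Gamma_2\circ\widetilde\Gamma_2$ directly out of the common covering map, and then verify the three defining properties of a weighted covering (graph morphism, local bijectivity on edges, weight preservation) by unwinding the definition of the composition $\circ$. Since $\Gamma_1,\widetilde\Gamma_1$ share the underlying graph $(V_1,E_1)$ and $\Gamma_2,\widetilde\Gamma_2$ share $(V_2,E_2)$, the hypothesis of ``identical covering maps'' furnishes a single graph map $p\colon(V_1,E_1)\to(V_2,E_2)$ realizing both coverings; recall this means that $p$ is a graph morphism which for every $v\in V_1$ restricts to a bijection from the edges incident to $v$ onto the edges incident to $p(v)$, and that $(\alpha_1)_{v,e}=(\alpha_2)_{p(v),p(e)}$ and $(\widetilde\alpha_1)_{v,e}=(\widetilde\alpha_2)_{p(v),p(e)}$ for every incident pair $(v,e)$. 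I would then define $q$ to equal $p$ on the vertex set $V_1$, and on edges to send an edge $(e_1,e_2)$ of $\Gamma_1\circ\widetilde\Gamma_1$ --- where $e_1\in E_1$ joins vertices $v,w$ and $e_2\in E_1$ joins $w,u$ --- to the pair $(p(e_1),p(e_2))$.

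The verification then splits into three routine checks. First, $q$ is well defined and is a graph morphism: since $p(e_1)$ joins $p(v)$ to $p(w)$ and $p(e_2)$ joins $p(w)$ to $p(u)$, the pair $(p(e_1),p(e_2))$ is genuinely a pair of adjacent edges, hence an edge of $\Gamma_2\circ\widetilde\Gamma_2$ joining $p(v)$ and $p(u)$, in accordance with $q$ on vertices. Second, $q$ satisfies the local bijectivity required of a covering: fixing $v\in V_1$, the edges of $\Gamma_1\circ\widetilde\Gamma_1$ incident to $v$ are exactly the pairs $(e_1,e_2)$ with $e_1$ incident to $v$ and $e_2$ incident to $w:=r_v(e_1)$; the covering $p$ gives a bijection $e_1\mapsto p(e_1)$ from the edges at $v$ onto those at $p(v)$, carrying $w=r_v(e_1)$ to $r_{p(v)}(p(e_1))=p(w)$, and then a further bijection $e_2\mapsto p(e_2)$ from the edges of $\widetilde\Gamma_1$ at $w$ onto those of $\widetilde\Gamma_2$ at $p(w)$; composing these gives a bijection from the edges of $\Gamma_1\circ\widetilde\Gamma_1$ at $v$ onto the edges of $\Gamma_2\circ\widetilde\Gamma_2$ at $p(v)$ (surjectivity of $q$ on vertices being inherited from $p$). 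Third, $q$ preserves weights: writing $w=r_v(e_1)$, one computes $(\beta_1)_{v,(e_1,e_2)}=(\alpha_1)_{v,e_1}(\widetilde\alpha_1)_{w,e_2}=(\alpha_2)_{p(v),p(e_1)}(\widetilde\alpha_2)_{p(w),p(e_2)}=(\beta_2)_{p(v),(p(e_1),p(e_2))}$.

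The whole argument is essentially bookkeeping; the single place that calls for care is the second verification, because the range of the second factor --- the edges of $\widetilde\Gamma$ incident to $w$ --- depends on the choice of the first factor $e_1$ through the intermediate vertex $w$, so one must follow $w$ and its image $p(w)$ through the composition of the two local bijections. Everything else reduces to substitution into the definition of the composition $\circ$ and of a weighted covering recalled above, together with Lemma~\ref{LmCoveringProperties}-style straightforwardness.
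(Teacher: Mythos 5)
Your proposal is correct and follows essentially the same route as the paper's proof: both define the covering map on the composed graph by sending a pair of adjacent edges $(e_1,e_2)$ to $(p(e_1),p(e_2))$, then verify incidence, weight preservation via the product formula for $\beta$, and local bijectivity by composing the two local edge bijections through the intermediate vertex. Your treatment of the local bijectivity step is slightly more detailed than the paper's, but the argument is the same.
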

\begin{proof}  Let $\phi:V_1\to V_2$, $\psi:E_1\to E_2$ be the covering maps. Let $$\Gamma_1\circ\widetilde \Gamma_1=(V_1,E_1',\beta_1),\;\; \Gamma_2\circ\widetilde \Gamma_2=(V_2,E_2',\beta_2).$$ By definition of the composition of graphs, we can identify the edges of $E_i'$ with pairs of adjacent edges of $E_i$. Define $\psi':E'_1\to E'_2$ as $$\psi'((e_1,\widetilde e_1))=(\psi(e_1),\psi(\widetilde e_1)),$$ for any two adjacent edges $e_1,\widetilde e_1\in  E_1$. If $e_1$ joins vertexes $v$ and $w$ and $\widetilde e_1$ joins vertexes $w$ and $u$ from $V_1$ then by definition of a covering $\psi(e_1)$ joins $\phi(v)$ and $\phi(w)$ and $\psi(\widetilde e_1)$ joins $\phi(w)$ and $\phi(u)$. Thus, $\psi'((e_1,\widetilde e_1))\in E'_2$ joins $\phi(v)$ and $\phi(u)$. Moreover, since covering maps preserve the weights of weighted graphs, using the definition of the composition of graphs we obtain
$$\beta_{2,\phi(v),\psi'((e_1,\widetilde e_1))}=\alpha_{2,\phi(v),\psi(e_1)}\widetilde\alpha_{2,\phi(w),\psi(\widetilde e_1)}=
\alpha_{1,v,e_1}\widetilde\alpha_{1,w,\widetilde e_1}=\beta_{1,v,(e_1,\widetilde e_1)}.$$

Further, since $\psi$ is a local isomorphism for any vertex $x\in V_1$ it sends set of adjacent edges $E_{1,x}$ one-to-one onto the set of edges $E_{2,\phi(x)}$ adjacent to $\phi(x)\in V_2$. It follows that $\psi'$ is one-to-one from $E'_{1,v}$ onto $E'_{2,\phi(v)}$ for any vertex $v\in V_1$. This finishes the proof.
\end{proof}

Further, let $\Gamma_1,\Gamma_2$ be uniformly bounded weighted graphs satisfying the conditions of Theorem 1.6 from \cite{DG18 Shape}. In particular, $\Gamma_1$ covers $\Gamma_2$. Let $R$ be sufficiently large. Assume that $\lambda\in\sigma(H_{\Gamma_2})$. Then the condition $2)$ of Lemma \ref{LmEquivNorm} holds for $A=H_{\Gamma_2}$. Assume, for instance, that
$$1\in\sigma(\mathrm{I}-
\tfrac{1}{R^2}(H_{\Gamma_2}-\lambda\mathrm{I})^*(H_{\Gamma_2}-\lambda\mathrm{I}))$$ (the second case can be treated similarly). Using the operations defined above introduce the graphs
$$\Gamma_1'=1-
\tfrac{1}{R^2}(\Gamma_1-\lambda)^*\circ(\Gamma_1-\lambda),\;\;\Gamma_2'=1-
\tfrac{1}{R^2}(\Gamma_2-\lambda)^*\circ(\Gamma_2-\lambda).$$ Lemmas \ref{LmCoveringProperties} and \ref{LmCoveringProperties1} imply that $\Gamma_1'$ covers $\Gamma_2'$. Using Lemma \ref{RemPropGraphs}, we obtain:
$$H_{\Gamma_i'}=\mathrm{I}-
\tfrac{1}{R^2}(H_{\Gamma_i}-\lambda\mathrm{I})^*(H_{\Gamma_i}-\lambda\mathrm{I}),\;\;i=1,2.$$ In particular, $1\in\sigma(H_{\Gamma_2'})$.
The operators $H_{\Gamma_i'}$ are positive. From the proof of Theorem 1.6, \cite{DG18 Shape}, for the case of positive Laplace type operators  we obtain that $\sigma(H_{\Gamma_2'})\subset\sigma(H_{\Gamma_1'})$. It follows that $1\in \sigma(H_{\Gamma_1'})$. Using Lemma \ref{LmEquivNorm} we obtain that $\lambda\in \sigma(H_{\Gamma_1})$. This finishes the correction of the proof of Theorem 1.6 from \cite{DG18 Shape}.

\end{document}